\theoremstyle{plain}
\theoremstyle{definition}
\newtheorem*{theorem*}{Theorem}
\newtheorem*{acknowledgement}{Acknowledgements}
\newtheorem{thm}{Theorem}
\newtheorem{pr}[thm]{Proposition}
\newtheorem{cor}[thm]{Corollary}
\newtheorem{lem}[thm]{Lemma}
\newtheorem{theorem}{Theorem}
\theoremstyle{definition}
\newtheorem{dfn}[thm]{\scshape{Definition}}
\newcommand\T{{\mathcal{T}}}
\def\cleardoublepage{\clearpage\if@twoside \ifodd\c@page\else
	\hbox{}
	\thispagestyle{empty}
	\newpage
	\if@twocolumn\hbox{}\newpage\fi\fi\fi}
\DeclareMathOperator{\Aut}{Aut}
\DeclareMathOperator{\St}{st}
\def\l{\langle}
\def\r{\rangle}
\def\o{\overline}
\def\ov{\overset}
\def\mc{\mathcal}
\def\c{\curvearrowright}
\keywords{Fractal groups, Engel elements}
\subjclass[2010]{20E08, 20F45}
\begin{document}

\title[Engel elements in some fractal groups]{Engel elements in some fractal groups}
\author[G.A. Fern\'andez-Alcober]{Gustavo A. Fern\'andez-Alcober}
\address{Department of Mathematics\\ University of the Basque Country UPV/EHU\\
48080 Bilbao, Spain}
\email{gustavo.fernandez@ehu.eus}
\author[A. Garreta]{Albert Garreta}
\address{Department of Mathematics\\ University of the Basque Country UPV/EHU\\
48080 Bilbao, Spain}
\email{albert.garreta@ehu.eus}
\author[M. Noce]{Marialaura Noce}
\address{Dipartimento di Matematica\\ Universit\`a di Salerno\\
Via Giovanni Paolo II, 132\\ 84084 Fisciano (SA)\\ Italy -- Department of Mathematics\\ University of the Basque Country UPV/EHU\\
48080 Bilbao, Spain }
\email{mnoce@unisa.it}

\thanks{All three authors are supported by the Spanish Government grant MTM2017-86802-P and by the Basque Government grant IT974-16.
The first author is also supported by the Spanish Government grant MTM2014-53810-C2-2-P, the second author by the ERC grant 336983, and the third author by the ``National Group for Algebraic and Geometric Structures, and their Applications'' (GNSAGA \-- INdAM)}

\begin{abstract}
%
Let $p$ be a prime and let $G$ be a subgroup of a Sylow pro-$p$ subgroup of the group of automorphisms of the $p$-adic tree. We prove that if $G$ is fractal and $|G':\St_G(1)'|=\infty$, then the set $L(G)$ of left Engel elements of $G$ is trivial. 
%
This result applies to fractal nonabelian groups with torsion-free abelianization, for example the Basilica group, the Brunner-Sidki-Vieira group, and also to the GGS-group with constant defining vector.

We further provide two examples showing that neither  of the requirements $|G':\St_G(1)'|=\infty$ and being fractal can be dropped. 
%
%
\end{abstract}

\maketitle


\section{Introduction}

In this paper we prove that certain fractal groups have no nontrivial  Engel elements. These include the Basilica group, the Brunner-Sidki-Vieira group, and  the GGS-group with constant defining vector. The complete family is described in \cref{marialaura_intro}. 

An element $g$ of a group $G$ is called \emph{left Engel} if for every $x \in G$ there exists an integer $n = n(g, x) \geq 1$ such that $[x, g, \overset{n}{\dots}, g] = 1$. The set of all left Engel elements of $G$ is denoted by $L(G)$, and if $L(G)=G$ we say that the group  $G$ is an \emph{Engel group}. Major areas of interest in Engel theory stem from the following questions: 
\begin{enumerate}
\item If $G$ is an Engel group, is $G$ locally nilpotent?
\item Is $L(G)$  a subgroup of $G$?
\end{enumerate}
Question 1 was considered by Burnside in the same paper of 1902 where he posed the famous Burnside problem. In general, the answer to Question 1 is negative:  the Golod-Safarevich  groups  are Engel, but not locally nilpotent.  The latter are the only known finitely generated example  with this property.  On the other hand, Question 1 has a positive answer for some  families of groups such as: finite groups (Zorn, 1936),  groups that have the max property (Baer, 1957), soluble groups (Gruenberg, 1959), and compact groups (Medvedev, 2003).

Notice that some Golod-Safarevich groups provide also a negative answer to the Burnside problem, i.e.\ they are infinite finitely generated groups all of whose elements have finite order. Other examples of groups with these properties are given by many fractal branch groups such as the first Grigorchuk group $\mathfrak{G}$ and the Gupta-Sidki group $G$. One may ask if these groups constitute a negative answer to the ``Engel problem'' as well (i.e.\ to Question 1). Bartholdi \cite{Bartholdi} showed in  2016 that this is not the case for $\mathfrak{G}$ and  $G$.  More precisely, he proved that $L(\mathfrak{G}) = \{ g\in \mathfrak{G} \mid g^2 =1\}$ and $L(G)=1$.
%
%
%
%
%
Observe that Bartholdi's result for $\mathfrak{G}$ answers Question 2 negatively. We remark that Bludov  provided a similar (unpublished) example in 2005 (see \cite{ml}). It is also noticeable that there exists an automaton group where determining if an element is left Engel is undecidable \cite{Gillibert}.

Likewise considerations can be made for right Engel elements and bounded right or left Engel elements (see the end of this introduction). We refer to the surveys \cites{abdollahi, Gunnar} for further general information. See also  \cites{marialaura300, MT} for  recent results on right and bounded Engel elements in some types of branch groups.



The paper is organized as follows. In Section 2, we introduce some basic terminology regarding (strongly) fractal groups $G$ of automorphisms of the $d$-adic tree. We prove that, for such groups,  $L(G)\subseteq \St_G(1)$ implies that $L(G)=1$.

In Section 3, we consider group actions $G\c H$. In this context, we use a more general notion of commutator. This is achieved by defining $[h,g]=h^{-1} h^g$, where $h^g$ is the action of $g$ on $h$, and  $h\in H$, $g\in G$. One then denotes by $L(G \c H)$ the set of all $g\in G$ such that for all $h\in H$, we have $[h,g, \overset{n}{\dots}, g]=1$ for some $n$. 

We start by showing that if $F\c A$ is an Engel action of a finite group $F$ on a finitely generated abelian group $A$, then  $\langle [A,F] \rangle$ is  finite. 
%
%
Then, we prove the main result of the paper, which we state below. By $\Gamma$ we denote the standard Sylow pro-$p$ subgroup of $\Aut\T_p$ (see \cref{l777}). 
\begin{theorem}\label{marialaura_intro}
Let $G \leq \Gamma$ be a fractal group such that $|G':\St_G(1)'| = \infty$. Then $L(G) = 1$.
\end{theorem}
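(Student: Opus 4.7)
The plan is to reduce Theorem~A to the Section~2 criterion which, for fractal $G \leq \Gamma$, says that $L(G) \subseteq \St_G(1)$ already forces $L(G) = 1$. So I only need to prove the containment $L(G) \subseteq \St_G(1)$, and I do so by contradiction.

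Suppose $g \in L(G) \setminus \St_G(1)$. Since $\Gamma$ acts on the first level as a cyclic group of order $p$, the quotient $F := G/\St_G(1)$ embeds into $C_p$, and the nontriviality of $\bar g$ forces $F = \langle \bar g \rangle \cong C_p$. Write $N = \St_G(1)$ and $A = N/N'$. Because $N \lhd G$ and $N'$ is characteristic in $N$, the finite group $F$ acts on the abelian group $A$ by conjugation, and the action commutator $[a, f] = a^{-1} a^f$ of Section~3 agrees, modulo $N'$, with the ordinary group commutator. The Engel hypothesis on $g$ then gives, for each $h \in N$, an integer $n$ with $[h, g, \overset{n}{\dots}, g] = 1$ in $G$, which descends to an Engel property for $\bar g$ acting on $A$. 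Since $A$ is abelian and $\bar g^k - 1$ is a multiple of $\bar g - 1$ in $\mathbb{Z}[F]$, the Engel property for the generator $\bar g$ propagates to every element of $F$, yielding a genuine Engel action $F \c A$.

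To invoke the Section~3 finiteness result I need $A$ to be finitely generated. This follows from the finite generation of $G$ (built into the working notion of fractal group, and verified for the intended applications such as the Basilica, Brunner--Sidki--Vieira and constant-vector GGS groups) via Schreier's lemma: $N$ has index at most $p$ in $G$, so it is finitely generated, and hence so is its abelianization $A$. Section~3 then produces that $\langle [A, F] \rangle$ is finite. On the other hand, a direct commutator calculation in the extension $1 \to A \to G/N' \to F \to 1$, using that both $A$ and $F$ are abelian, identifies $\langle [A, F] \rangle$ with $G'/N'$. Hence $G'/\St_G(1)'$ is finite, contradicting the hypothesis, and $L(G) \subseteq \St_G(1)$ follows; the Section~2 criterion finishes the proof.

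The main obstacle is organizational rather than technical: one has to verify the identification $\langle [A, F] \rangle = G'/N'$ cleanly, and confirm that the Engel property, literally stated only for the single element $\bar g$, really upgrades to every element of $F$. Both are elementary consequences of the commutativity of $A$ and the cyclicity of $F$, after which Theorem~A is an immediate consequence of the two preparatory results of Sections~2 and~3.
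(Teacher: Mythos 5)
Your proof is correct, and it follows the same overall skeleton as the paper's argument (argue by contradiction, deduce $|G:\St_G(1)|=p$, pass to the induced action on $A=\St_G(1)/\St_G(1)'$, apply the Section~3 finiteness result, identify $G'=[G,\St_G(1)]$ via cyclicity of the quotient, and contradict $|G':\St_G(1)'|=\infty$), but it diverges at the one substantive step: how to obtain an Engel \emph{action} of $F=G/\St_G(1)$ on $A$ from a single left Engel element outside the stabilizer. The paper passes to $G/\St_G(1)'$, notes it is soluble, invokes Gruenberg's theorem that the left Engel elements of a soluble group form a subgroup, and then uses the dichotomy $L(G/S')=S/S'$ or $G/S'$ to conclude the whole quotient is Engel. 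You instead observe that $\bar g$ generates $F\cong C_p$ and propagate the Engel property from $\bar g$ to every power $\bar g^k$ via the factorization $\bar g^k-1=(\bar g-1)(1+\bar g+\cdots+\bar g^{k-1})$ in the commutative ring $\mathbb{Z}[F]$ acting on the abelian group $A$. This is more elementary and self-contained -- it removes the dependence on Gruenberg's theorem -- at the cost of being tied to the cyclicity of $F$, which is all that is available here anyway. Two minor remarks: (i) your identification $\langle [A,F]\rangle = G'/N'$ needs $F$ cyclic, not merely abelian; one always has $\langle[A,F]\rangle=[G,N]N'/N'$, but $[G,N]N'=G'$ uses that $G/N$ is cyclic, so the claim is fine here but should be attributed to cyclicity rather than to commutativity of $A$ and $F$. (ii) You are right that finite generation of $A$ must be secured before applying the Section~3 proposition; the paper passes over this silently, whereas your Schreier-lemma remark (under the standing assumption that fractal groups are finitely generated) makes the argument more complete on this point.
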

The main idea of the proof is the following:  denote by $\bar{\ }$ the projection of $G$ onto $G/\St_G(1)'$. The conjugation  $G\c \St_G(1)$ induces an action $\o G \c\o{\St_G(1)}$. Since $\o{\St_G(1)}$ is abelian, the latter yields an action  
\begin{equation}\label{p}
\o G/ \o{\St_G(1)}\c \o{\St_G(1)}.
\end{equation} 
Note that $\o G/ \o{\St_G(1)}\cong C_p$. Using our previous results, we conclude that either the action \eqref{p} is Engel and then $|G': \St_G(1)'|< \infty$, or else it is trivial and  $L(G)\subseteq \St_G(1)$, in which case  $L(G)=1$.

In Section 4, we apply \cref{marialaura_intro} to a certain class of fractal groups, and we obtain the following results.
\begin{theorem}\label{marialaura_cor_intro}
Let $G\leq \Gamma$ be a  fractal  group with  torsion-free abelianization. Then $L(G)=1$.
\end{theorem}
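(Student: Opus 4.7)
The plan is to invoke Theorem~\ref{marialaura_intro}: it suffices to show $|G':\St_G(1)'|=\infty$. (As in the abstract, the statement implicitly requires $G$ to be nonabelian; otherwise $L(G)=G$.) Since $G\leq\Gamma$ is fractal, $G/\St_G(1)\cong C_p$, so $G'\leq\St_G(1)$ and $C_p=G/\St_G(1)$ acts by conjugation on $A:=\St_G(1)/\St_G(1)'$; a standard identification gives $G'/\St_G(1)'=[A,C_p]$, reducing the problem to showing that $[A,C_p]$ is infinite. To exploit the tree, I would use the section embedding $\psi\colon\St_G(1)\hookrightarrow G^p$, which induces a $C_p$-equivariant homomorphism $\bar\psi\colon A\to(G^{ab})^p$ (with $C_p$ acting on the target by cyclic shift), and whose coordinates $\bar\psi_i$ are each surjective by fractality. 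Arguing by contradiction: if $[A,C_p]$ were finite then $[A,C_p]\subseteq A_{\mathrm{tors}}$; since $G^{ab}$ is torsion-free, $\bar\psi_0$ kills $A_{\mathrm{tors}}$, and combined with the shift relation $\bar\psi_1(g)=\bar\psi_0(g^a)$ this forces $\bar\psi_0=\cdots=\bar\psi_{p-1}$, so $\bar\psi(A)\subseteq\Delta:=\{(x,\ldots,x):x\in G^{ab}\}$.

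The main obstacle is then to contradict $\bar\psi(A)\subseteq\Delta$. Unpacked, this says every $g\in\St_G(1)$ has sections $g_0,\ldots,g_{p-1}$ all congruent modulo $G'$. Applied to $g\in G'$ this yields $\psi_i(G')\subseteq G'$ for all $i$, and together with $\psi_i(\St_G(1)')=G'$ (from fractality) one obtains $\psi_i|_{G'}\colon G'\twoheadrightarrow G'$. This strong self-similarity of $G'$ should be incompatible with $G$ being nonabelian with torsion-free abelianization: for the fractal groups in scope, the rigid stabilizer $N_0=\{g\in\St_G(1):g_0=1\}$ would have to embed into the torsion-free abelian group $\St_G(1)/G'$ (since the surjection $\psi_0|_{G'}$ should be an isomorphism by Hopficity), yet $N_0$ is forced to be nonabelian by the self-similarity of $G$. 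Extracting this contradiction cleanly, and in sufficient generality, is the technical heart of the proof.

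Once $\bar\psi(A)\not\subseteq\Delta$ has been established, the conclusion is immediate: pick $g\in A$ with $\bar\psi(g)=(x_0,\ldots,x_{p-1})\notin\Delta$; then $\bar\psi(g^{-1}g^a)=(x_1-x_0,\ldots,x_0-x_{p-1})$ is a nonzero element of $(G^{ab})^p$, which has infinite order since $G^{ab}$ is torsion-free. Hence $[A,C_p]$ contains an element of infinite order, $|G':\St_G(1)'|=\infty$, and Theorem~\ref{marialaura_intro} yields $L(G)=1$.
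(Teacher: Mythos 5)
Your reduction is on the right track and, up to a point, follows the same route as the paper: you invoke \cref{albert}, assume for contradiction that $|G':\St_G(1)'|<\infty$, and use torsion-freeness of $G^{\mathrm{ab}}$ together with fractality (which gives $\pi_i(\psi(\St_G(1)'))=G'$) to conclude that every section of every element of $G'$ lies in $G'$, i.e.\ $\psi(G')\leq G'\times\overset{p}{\dots}\times G'$. (Your detour through the diagonal $\Delta$ is harmless but unnecessary: once $G'/\St_G(1)'$ is finite, each $\pi_i(\psi(G'))/G'$ is a finite subgroup of the torsion-free group $G/G'$, hence trivial.) You also correctly flag that the statement implicitly assumes $G$ nonabelian, as in the body of the paper.

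However, there is a genuine gap exactly where you say the ``technical heart'' lies, and the paper closes it with a one-line observation you are missing: \cref{ra} states that any subset $S\subseteq\St(1)$ with $\psi(S)\subseteq S\times\overset{d}{\dots}\times S$ must be trivial (if $n$ is maximal with $S\subseteq\St(n)$, then all sections of elements of $S$ lie in $\St(n)$, forcing $S\subseteq\St(n+1)$). Applied to $S=G'$, this immediately gives $G'=1$, contradicting nonabelianity --- no rigid stabilizers, no Hopficity. Your proposed substitute does not work as sketched: $G'$ need not be finitely generated, so you cannot conclude that the surjection $\psi_0|_{G'}\colon G'\twoheadrightarrow G'$ is an isomorphism by Hopficity; and the claimed embedding of $N_0=\{g\in\St_G(1):g_0=1\}$ into $\St_G(1)/G'$, as well as the assertion that $N_0$ is ``forced to be nonabelian,'' are unsubstantiated. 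As written, the proof is incomplete at its decisive step.
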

\begin{theorem}
The following groups have no nontrivial left Engel elements: the Basilica group, the Brunner-Sidki-Vieira group, and the GGS-group with constant defining vector $\mc{G}$.
\end{theorem}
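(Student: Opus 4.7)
The plan is simply to verify the hypotheses of Theorem~B (\cref{marialaura_cor_intro}) for each of the three groups listed, and then invoke it to conclude $L(G)=1$. Concretely, for each of $G=$ the Basilica group, the Brunner--Sidki--Vieira group, or the GGS-group $\mc{G}$ with constant defining vector, I need to check: (i) $G\le \Gamma$; (ii) $G$ is fractal; (iii) $G^{ab}=G/G'$ is torsion-free.

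Conditions (i) and (ii) are either immediate or well documented in the literature. The Basilica and Brunner--Sidki--Vieira groups both act on $\T_2$, and since $\Aut\T_2$ is already pro-$2$ it coincides with its Sylow pro-$2$ subgroup $\Gamma$, so (i) is automatic in those two cases. For the GGS-group $\mc{G}$, the defining wreath recursion uses only powers of the $p$-cycle $a$, so every generator, and hence $\mc{G}$ itself, is contained in the Sylow pro-$p$ subgroup $\Gamma$ of $\Aut\T_p$. Fractality for all three of these groups is classical and can be cited directly from the standard references on each.

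The real content is (iii). For the Basilica group (generated by $a$ and $b$ with $b=(1,a)$ and $a=(1,b)\sigma$) and for the Brunner--Sidki--Vieira group, I would exhibit a pair of independent homomorphisms onto $\ZZ$ counting occurrences of the two generators in an expression for an element; this shows that both abelianizations are free of rank $2$, hence torsion-free. For $\mc{G}$ with constant defining vector $(1,1,\dots,1)$, the analysis is slightly more delicate: one first exploits the constant-vector symmetry to produce a commutator identity showing $a\in \mc{G}'$, so that $\bar a=1$ in the abelianization; one then checks that the image of $b$ has infinite order in $\mc{G}^{ab}$ (for example by building a homomorphism $\mc{G}\to\ZZ$ sending $a\mapsto 0$ and $b\mapsto 1$, which is well defined precisely because of the constant vector). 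This gives $\mc{G}^{ab}\cong\ZZ$, which is torsion-free.

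The main obstacle is thus step (iii) for $\mc{G}$: unlike for Basilica and Brunner--Sidki--Vieira, where the two length functions are obviously well defined homomorphisms, for GGS-groups the existence of a nontrivial homomorphism to $\ZZ$ (equivalently, the claim that $a\in\mc{G}'$) depends delicately on the defining vector being constant, and requires a tailored wreath-recursion computation. Once (iii) is in hand for each group, Theorem~B applies and delivers $L(G)=1$ in all three cases.
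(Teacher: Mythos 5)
Your treatment of the Basilica and Brunner--Sidki--Vieira groups is correct and coincides with the paper's: both are fractal subgroups of $\Gamma=\Aut\T_2$ with abelianization $\ZZ^2$, so \cref{marialaura} applies directly. The problem is the third group. For the GGS-group $\mc{G}=\langle a,b\rangle$ with constant defining vector, your step (iii) is false: $a$ is a rooted automorphism of order $p$, and $b=(a,\dots,a,b)$ satisfies $b^p=(a^p,\dots,a^p,b^p)=(1,\dots,1,b^p)$, hence $b^p=1$. So $\mc{G}$ is generated by two elements of order $p$, its abelianization is a quotient of $C_p\times C_p$ (in fact $|\mc{G}:\mc{G}'|=p^2$, so $\mc{G}^{ab}\cong C_p\times C_p$), there is no nontrivial homomorphism $\mc{G}\to\ZZ$, and in particular $a\notin\mc{G}'$ and the image of $b$ has order $p$, not infinite order. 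Consequently $\mc{G}^{ab}$ is not torsion-free and \cref{marialaura_cor_intro} cannot be invoked for $\mc{G}$ at all; the paper says exactly this in the introduction.

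The missing idea is a reduction to a suitable subgroup. The paper takes the normal subgroup $K=\langle ba^{-1}\rangle^{\mc{G}}$ of index $p$, over whose derived subgroup $\mc{G}$ is weakly regular branch, with $K/K'\cong C_\infty\times\overset{p-1}{\dots}\times C_\infty$. Working in $\o{\mc{G}}=\mc{G}/K'=\o K\rtimes\langle\o a\rangle$, Gruenberg's theorem (solubility) forces $L(\o{\mc{G}})$ to be $\o K$ or all of $\o{\mc{G}}$, and the second case is excluded by \cref{gustavobis} because an Engel action of $C_p$ on a free abelian group is trivial; this gives $L(\mc{G})\subseteq L(K)$ (\cref{mm6}). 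One then cannot apply \cref{marialaura} to $K$ directly either, since $K$ is not self-similar; the paper conjugates by an explicit $h\in\St(1)$ so that $K^h$ becomes strongly fractal (\cref{AJ}) while retaining the torsion-free abelianization $K/K'$, and only then concludes $L(K^h)=1$. Both the passage to $K$ and the conjugation step are essential and absent from your proposal.
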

\cref{marialaura_cor_intro} can be applied directly to the Basilica and the Brunner-Sidki-Vieira group. This is not the case with $\mc{G}$. Instead, we reduce the study of $L(\mc{G})$ to a subgroup $K$ that meets (up to conjugation in $\Aut\T_p$) the requirements of \cref{marialaura_cor_intro}.

In Section 5 we finish the paper by showing that none of the conditions $|G':\St_G(1)'|=\infty$ and being fractal can be omitted in \cref{marialaura_intro}.
%


%

We end this introduction by pointing out that if  $L(G)=1$ for some group $G$, then  $\o L(G)= R(G)= \o R(G)=1$. This is due to the following inclusions (proved in \cite{Hein}):
%
%
$
R(G)^{-1}, \o L(G), \o R(G)^{-1} \subseteq L(G).
$
%
%
Here, $R(G)$ stands for the set of right Engel elements,  and the bars indicate that the elements are bounded left or right Engel, respectively (see \cites{abdollahi,Gunnar}).
%





\addtocontents{toc}{\protect\setcounter{tocdepth}{1}}
    
\section{Engel elements in strongly fractal groups}

Throughout the paper we will use the following standard notation:  $[x,_n y]=[x, y, \ov{n}{\dots} y]= [[x,_{n-1},y],y]$ for $n\geq 2$, and $[x,_1 y]=[x,y]=x^{-1}y^{-1}xy$. We also let $[x,_0 y]=x$.

We start this section by fixing some terminology regarding groups $G$ of automorphisms of rooted $d$-adic trees (see \cite{branch} or \cites{nekrashevych} for a detailed discussion on this topic). Afterwards, we  prove that if $G$ is strongly fractal, then $L(G)\subseteq \St_G(1)$ implies that $L(G)=1$ (\cref{ts}).  

%

Let $d$ be a positive integer, and $\T_d$ (or $\T$)  the rooted $d$-adic tree. The group of automorphisms of $\T$ is denoted by $\Aut\T$. We let $L_n$ be the $n$-th level of $\T$. The $n$-th \emph{level stabilizer} of $\Aut\T$ is the subgroup of $\Aut\T$ that fixes every vertex of $L_n$. We will denote it by  $\St(n)$.  More generally, if $G \leq \Aut\T$, we define the $n$-th level stabilizer of $G$ as $\St_{G}(n) = \St(n) \cap G$.
Notice that these stabilizers are normal subgroups of the corresponding group. We let $\psi$ be the isomorphism
\begin{align*}
\psi: \St(1) & \longrightarrow \Aut\T \times \overset{d}{\cdots} \times \Aut\T \label{psi_iso}\\
g & \longmapsto (g_{u})_{u \in L_1}, \nonumber
\end{align*}
where $g_{u}$ is the section of $g$ at the vertex $u$, i.e.\ the action of $g$ on the subtree $T_u$ that hangs from the vertex $u$. An automorphism $s \in \Aut\T$ is called \emph{rooted} if there exists a permutation $\sigma \in S_{d}$ such that $s$ rigidly permutes the trees $\{T_u \mid u\in L_1\}$  according to the permutation $\sigma$. We will identify $s$ and $\sigma$.
Notice that if $g \in \St(1)$ with $\psi(g)=(g_1, \dots, g_d)$, and $\sigma$ is a rooted automorphism, then,
\begin{equation}\label{carmine}
\psi(g^{\sigma})= \left(g_{\sigma^{-1}(1)},  \dots, g_{\sigma^{-1}(d)}\right).
\end{equation}
Any element $g$ of a group of automorphisms $G\leq \Aut\T$ can be written uniquely in the form $g= g_0 \sigma$, where $g_0\in \St(1)$ and $\sigma$ is a rooted automorphism. The group $G\leq \Aut\T$ is said to be \emph{self-similar} if $$\psi(g_0) \in  G \times \overset{d}{\cdots} \times G$$ for all $g\in G$ (thus for example $\Aut\T$ is self-similar).

The decomposition $g=g_0 \sigma$ above  applied to the elements of $\Aut\T_d$, together with the action \eqref{carmine}, yields  isomorphisms
\begin{align}
\begin{split}\label{e: iterated_wreath}
\Aut\T_d &\cong \St(1) \rtimes S_d \cong \left(\Aut\T_d \times \ov{d}{\dots} \times \Aut\T_d\right) \rtimes S_d \\ 
&\cong \Aut\T_d \wr S_d \cong (( \dots \wr S_d) \wr S_d) \wr S_d. 
\end{split}
\end{align}
\begin{dfn}\label{l777}
Let $p$ be a prime and let $\sigma$ be the rooted automorphism of $\Aut\T_p$ corresponding to the cycle $(1 \dots p)$. The \emph{Sylow pro-$p$ subgroup} of $\Aut\T_p$ induced by $\sigma$ is the subgroup $\Gamma\leq\Aut\T_p$ that is mapped to $ (( \dots \wr \langle \sigma \rangle) \wr \langle \sigma \rangle) \wr \langle \sigma \rangle = (( \dots \wr  C_p) \wr C_p ) \wr  C_p$ under the isomorphism \eqref{e: iterated_wreath}. Note that if $p=2$ we have $\Gamma = \Aut\T_2$.
\end{dfn}

In this paper we will mostly study self-similar groups $G\leq \Aut\T_p$  such that $G\leq \Gamma$. In this case, every element $g \in G$ can be written in the form $g=h \sigma^t$, for some $t\in \mathbb{Z}$ and $h \in \St(1)$ such that $\psi(h) \in G\times \dots \times G$. 

A self-similar group $G$ is called \emph{strongly fractal} if $\pi_i(\St_G(1))=G$ for all $i =1, \dots, d$, where $\pi_i$ is the projection onto the $i$-th component of $G \times \ov{d}{\dots} \times G$.
If $G \leq \Gamma$, then this is equivalent to saying that $G$ is fractal,  by Lemma 2.5 of \cite{Jone}.  



We now  study the set of left Engel elements of strongly fractal groups $G\leq \Aut\T_d$.  
We will use the following identity
\begin{equation}\label{zapatillas_nuevas}
\psi\left([h,_n g]\right)=\left( [h_1,_n g_1], \dots, [h_d,_n g_d] \right),
\end{equation}
which holds for any two elements $h, g\in \St(1)$. Here 
the 
$h_i$'s and  $g_i$'s are the components of $\psi(g)$ and $\psi(h)$.

\begin{lem}\label{mrs_robinson}
Let $G \leq \Aut\T_d$ be a strongly fractal group.  Then
\begin{equation*}
L(G)\cap \St_G(1) = \left\{ h\in \St_G(1) \mid \psi(h) \in L(G) \times \overset{d}{\dots} \times L(G) \right\}.
\end{equation*}
%
%
\end{lem}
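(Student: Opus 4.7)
The plan is to prove the two inclusions separately, using identity \eqref{zapatillas_nuevas} as the main computational tool in both directions, together with the strongly fractal hypothesis in one direction and the normality of $\St_G(1)$ in the other.

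For the inclusion $\subseteq$, I would take $h\in L(G)\cap \St_G(1)$ with $\psi(h)=(h_1,\dots,h_d)$, fix an index $i$ and an arbitrary element $x_i\in G$. Since $G$ is strongly fractal, $\pi_i(\St_G(1))=G$, so there exists $x\in \St_G(1)$ whose $i$th section equals $x_i$. Because $h$ is left Engel, $[x,_n h]=1$ for some $n$, and reading off the $i$th coordinate in \eqref{zapatillas_nuevas} yields $[x_i,_n h_i]=1$, so $h_i\in L(G)$.

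For the inclusion $\supseteq$, I would assume $h\in \St_G(1)$ with $\psi(h)=(h_1,\dots,h_d)$ and every $h_i$ in $L(G)$, and fix $y\in G$. The subtlety is that $y$ itself need not belong to $\St_G(1)$, so \eqref{zapatillas_nuevas} does not apply directly to the pair $(y,h)$. I sidestep this by passing to $z:=[y,h]$, which lies in $\St_G(1)$ because the first level stabilizer is normal in $G$. By self-similarity (implicit in being strongly fractal), the components $z_i$ of $\psi(z)$ lie in $G$; since $h_i\in L(G)$ we can choose $n_i$ with $[z_i,_{n_i}h_i]=1$, and setting $n:=\max_i n_i$ we have $[z_i,_n h_i]=1$ for every $i$. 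Identity \eqref{zapatillas_nuevas} then yields $[z,_n h]=1$, i.e.\ $[y,_{n+1} h]=1$, so $h\in L(G)$.

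The main point to watch, and the only mildly subtle step, is the reverse inclusion: one cannot immediately apply the coordinatewise commutator identity to an arbitrary $y\in G$, but a single preliminary commutator drops us into $\St_G(1)$, after which the argument goes through uniformly, and the fact that $[z_i,_n h_i]=1$ persists for all indices beyond $n_i$ is what allows one to take a single common $n$.
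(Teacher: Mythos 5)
Your proposal is correct and follows essentially the same route as the paper: strong fractality plus identity \eqref{zapatillas_nuevas} for the forward inclusion (the paper phrases it as a contradiction, you argue directly), and one preliminary commutator $[y,h]\in\St_G(1)$ via normality for the reverse inclusion, which is exactly the step the paper leaves implicit and you correctly fill in by taking a common $n=\max_i n_i$.
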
 
\begin{proof}
Let $g\in L(G)\cap \St_G(1)$, and write $\psi(g)=(g_1, \dots, g_d)$. We show that each $g_i$ is left Engel. 
Assume for contradiction that there exists $h \in G$ be such that $[h,_n g_i]\neq 1$ for all $n$. Since $G$ is strongly fractal, there exists an element $s \in \St_G(1)$ such that $\psi(s)=(h_1, \dots, h_{i-1}, h, h_{i+1}, \dots, h_d)$, for some   $h_1, \dots, h_{i-1}, h_{i+1}, \dots, h_d \in G$. Then, by \eqref{zapatillas_nuevas},
$$
\psi\left([s,_n g]\right)=\left([h_1,_n g_1], \dots, [h_{i-1},_n g_{i-1}], [h,_n g_i],[h_{i+1},_n g_{i+1}], \dots, [h_d,_n g_d] \right).
$$
This element is nontrivial because $[h,_n g_i]\neq 1$ for all $n$, and so $[s,_n g] \neq 1$ for all $n$, contradicting the fact that $g \in L(G)$. 

To prove the reverse inclusion, let $h \in \St_G(1)$ be such that all components of $\psi(h)$ are left Engel, and let $g$ be any element of $G$. Then, since $\St_G(1)$ is normal in $G$, we have $[g,h]\in \St_G(1)$. It follows now from the equality \eqref{zapatillas_nuevas} that $[g,_n h]=1$ for some $n$, as required.
\end{proof}


The following constitutes a key observation.
\begin{pr}\label{ra}
Let $S$ be a subset of $\Aut\T_d$. Suppose that $S\subseteq \St(1)$ and that $\psi(S)\subseteq S \times \ov{d}{\dots} \times S$. Then $S=1$.
\end{pr}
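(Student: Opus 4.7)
The plan is to show, by induction on $n$, that every element of $S$ lies in the level stabilizer $\St(n)$ for every $n \geq 1$. Since $\bigcap_{n \geq 1} \St(n) = 1$ in $\Aut\T_d$, this forces each element of $S$ to be trivial.

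The base case $n=1$ is immediate from the hypothesis $S \subseteq \St(1)$. For the inductive step, assume that $S \subseteq \St(n)$ and pick any $s \in S$. By the second hypothesis, $\psi(s) = (s_1, \dots, s_d)$ with $s_1, \dots, s_d \in S$. Applying the inductive hypothesis to each section, each $s_i$ fixes the $n$-th level of the subtree $\T_{u_i}$ hanging from the $i$-th vertex of $L_1$. Since $s$ itself fixes $L_1$ and acts on each $\T_{u_i}$ via $s_i$, it follows that $s$ fixes every vertex of the $(n+1)$-st level of $\T_d$, i.e.\ $s \in \St(n+1)$.

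The key conceptual point is that the self-referential condition $\psi(S) \subseteq S \times \overset{d}{\cdots} \times S$ lets us re-apply the level-$1$ containment at every recursion depth, effectively pushing the stabilization one level deeper at each step. There is no real obstacle here: once the inductive setup is written down, the step is a direct consequence of the definitions of $\psi$ and of the level stabilizers, together with the well-known fact that $\Aut\T_d$ acts faithfully on $\T_d$, so that $\bigcap_n \St(n) = 1$.
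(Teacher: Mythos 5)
Your proof is correct and is essentially the same argument as the paper's: the key step---that the sections of $s$ lying in $S\subseteq\St(n)$ force $s\in\St(n+1)$---is identical. The only difference is presentational: you run a direct induction and conclude via $\bigcap_n\St(n)=1$, whereas the paper phrases the same idea as a contradiction with a maximal $n$ for which $S\subseteq\St(n)$.
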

\begin{proof}
Suppose that $S\neq 1$. Then there exists a maximum $n$ such that $S \subseteq \St(n)$, but $S \not \subseteq \St(n+1)$. Let $s\in S$. In particular, $s\in \St(1)$, and the components $s_1, \dots, s_d$ of  $\psi(s)$ belong to $S$ by hypothesis. Since  $S\subseteq \St(n)$, each $s_i$ stabilizes the first $n$ levels of $\T_d$, and hence $s \in \St(n+1)$. This occurs with all $s\in S$, yielding the contradictory inclusion $S\subseteq \St(n+1)$. Therefore $S=1$. 
\end{proof}

The following is an immediate consequence of \cref{mrs_robinson} and \cref{ra}.

\begin{cor}\label{ts}
Let $G \leq \Aut\T_d$ be a strongly fractal group such that $L(G) \subseteq \St_G(1)$. Then $L(G)=1$.
\end{cor}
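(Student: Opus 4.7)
The plan is to assemble Corollary \ref{ts} directly from the two preceding results, with no extra work beyond checking hypotheses. The key observation is that Proposition \ref{ra} is tailored to apply to exactly such invariant subsets of $\St(1)$, and Lemma \ref{mrs_robinson} is precisely what tells us that $L(G)$ is invariant under $\psi$ in the sense required by the proposition.

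First, I would use the hypothesis $L(G) \subseteq \St_G(1)$ to rewrite $L(G) = L(G) \cap \St_G(1)$. Feeding this into Lemma \ref{mrs_robinson} gives the equality
\[
L(G) = \bigl\{ h \in \St_G(1) \mid \psi(h) \in L(G) \times \overset{d}{\cdots} \times L(G) \bigr\},
\]
from which one reads off the two crucial facts: $L(G) \subseteq \St(1)$ (trivially, since already $L(G)\subseteq\St_G(1)$), and $\psi(L(G)) \subseteq L(G) \times \overset{d}{\cdots} \times L(G)$.

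Next, I would apply Proposition \ref{ra} with $S := L(G)$. Both of its hypotheses have just been verified, so its conclusion yields $L(G) = 1$, as required.

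There is no serious obstacle to anticipate: the strong-fractality assumption has already been absorbed in the proof of Lemma \ref{mrs_robinson}, and the self-similar descent argument is packaged in Proposition \ref{ra}. The corollary is therefore only a two-line assembly, amounting to noticing that the set $L(G)$ satisfies the two hypotheses of the proposition whenever it lies inside the first-level stabilizer.
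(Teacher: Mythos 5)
Your proof is correct and is exactly the argument the paper intends: the corollary is stated there as an immediate consequence of Lemma \ref{mrs_robinson} and Proposition \ref{ra}, and your two-step assembly (rewriting $L(G)=L(G)\cap\St_G(1)$ to extract $\psi(L(G))\subseteq L(G)\times\overset{d}{\cdots}\times L(G)$, then applying Proposition \ref{ra} with $S=L(G)$) is precisely that deduction.
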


\section{Engel elements, group actions, and fractal groups}
\addtocontents{toc}{\protect\setcounter{tocdepth}{3}}\label{obm}

In this section, we generalize the notion of commutator and of left Engel element  to the context of group actions $G\c H$. 
We will use exponential notation to refer to the action of a group $G$ on another group $H$, so that $h \cdot g = h^g$ for all $g\in G, h\in H$.

Given an action of groups $G\c H$, one may define commutators by letting $[h,g]=h^{-1}h^{g}$, for $g\in G$ and $h\in H$. Then, $[h,_n g]$ is defined similarly as before. Of course, if $H$ is a normal subgroup of $G$ and the action $G\c H$ is the standard conjugation, then $[h,g]$ is the usual commutator $h^{-1}g^{-1}h g$. An element $g\in G$ is called \emph{left Engel with respect to $G\c H$} if for all $h\in H$  there exists $n\geq 1$ such that $[h,_n g]=1$. The set of left Engel elements of $G$ with respect to $G\c H$ will be denoted $L(G\c H)$. If $L(G\c H)=G$, then  $G\c H$ is called an \emph{Engel action}. Given $S \subseteq H$, and $T \subseteq G$, we denote $\langle [S, T] \rangle =  \langle  [s, t] \mid s \in S, t \in T \rangle$.


We start by proving a key fact regarding  how periodic groups can act on finitely generated abelian groups. 

\begin{pr}
\label{gustavobis}
Let $G\c A$ be an Engel action of a finite group $G$ on a finitely generated  abelian group $A$. Then $\l [A, G] \r$ is finite. As a consequence, if $A$ is free abelian, then the action $G\c A$ is trivial.
\end{pr}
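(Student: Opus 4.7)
The plan is to regard $A$ (written additively) as a $\mathbb{Z}[G]$-module, so that the commutator translates as $[a,g] = -a + g\cdot a = (g-1)\cdot a$, and hence $[a,_n g] = (g-1)^n\cdot a$. The Engel hypothesis then reads: for each $a\in A$ and each $g\in G$, there is an $n\geq 1$ with $(g-1)^n\cdot a = 0$. Since $\langle [A,G]\rangle = \sum_{g\in G}(g-1)A$ and $G$ is finite, the first claim reduces to showing that $(g-1)A$ is finite for every $g\in G$.

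Fix $g\in G$ of order $m$. Applying the Engel condition to a finite generating set of $A$ and taking the maximum of the resulting exponents yields a single integer $N$ such that $(g-1)^N$ annihilates all of $A$. The key step is to combine this nilpotency with the torsion relation $g^m=1$ by means of the factorization $x^m-1 = (x-1)(1+x+\cdots+x^{m-1})$ in $\mathbb{Z}[x]$. Since $1+x+\cdots+x^{m-1}\equiv m\pmod{x-1}$, one can write $1+x+\cdots+x^{m-1} = m + (x-1)q(x)$ for some $q(x)\in\mathbb{Z}[x]$. Evaluating at $x=g$ and applying to an arbitrary $b\in(g-1)A$, the left-hand side vanishes because $(1+g+\cdots+g^{m-1})(g-1) = g^m-1 = 0$, and rearranging gives the relation $m\cdot b = -(g-1)q(g)\cdot b$.

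Iterating this relation and using that $m$, $g-1$ and $q(g)$ all commute, I obtain $m^k\cdot b = (-1)^k(g-1)^k q(g)^k\cdot b$ for every $k\geq 1$. Choosing $k = N-1$ and noting that $(g-1)^{N-1}$ kills $(g-1)A$ (because $(g-1)^N$ kills $A$), this yields $m^{N-1}\cdot b = 0$. Hence $(g-1)A$ is a finitely generated abelian group of exponent dividing $m^{N-1}$, and is therefore finite. Summing over the finitely many $g\in G$ then gives that $\langle [A,G]\rangle$ is finite. The consequence is immediate: if $A$ is free abelian then it contains no nontrivial finite subgroup, so $\langle [A,G]\rangle = 0$, which means $(g-1)A = 0$ for every $g\in G$, i.e.\ the action is trivial.

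I expect the only genuinely delicate point to be the polynomial identity bridging the Engel-nilpotency of $g-1$ with the relation $g^m=1$; once this bridge is in place, the rest is a routine finiteness argument that uses only that $A$ is finitely generated and $G$ is finite. The argument also makes transparent where abelianness of $A$ is essential, namely in identifying commutators with elements of the $\mathbb{Z}[G]$-submodule $(g-1)A$.
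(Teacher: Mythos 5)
Your argument is correct, and it takes a genuinely different route from the paper's. The paper works multiplicatively and element by element: by induction on the Engel length $n$ it shows that $[a,_n g]=1$ forces $[a,g]^{\ell^{n-1}}=1$ with $\ell=|G|$ (the inductive step passes to $A/K$, where $K$ is generated by the $G$-orbit of $[a,_{n-1} g]$, an abelian group of exponent dividing $\ell$), so that every element of $[A,G]$ is torsion and $\langle [A,G]\rangle$ is a finitely generated abelian torsion group, hence finite. You instead linearize: viewing $A$ as a $\mathbb{Z}[G]$-module, you use the finite generation of $A$ up front to extract a uniform $N$ with $(g-1)^N A=0$, and then combine the identity $1+x+\cdots+x^{m-1}=m+(x-1)q(x)$ with $g^m=1$ to get $m^{N-1}(g-1)A=0$. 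Both uses of the hypotheses are sound: your reduction $\langle[A,G]\rangle=\sum_{g\in G}(g-1)A$ is valid because each $(g-1)A$ is a subgroup, and the uniform $N$ exists because $(g-1)^N$ is an endomorphism of $A$ that kills a generating set. What your version buys is a single exponent $m^{N-1}$ (with $m$ the order of $g$ rather than of $G$) bounding the whole subgroup $(g-1)A$ at once; what the paper's version buys is that it never needs a uniform nilpotency degree, so the torsion statement for individual commutators holds even without assuming $A$ finitely generated --- that hypothesis enters only at the final step. The one cosmetic point to add to your write-up is the degenerate case $N=1$: there $k=N-1=0$ makes your iterated identity vacuous, but $(g-1)A=0$ outright, so one clause disposes of it.
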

\begin{proof}
Let $\ell$ be the order of $G$, and take two elements $a \in A$, and $g \in G$. We claim that if  $[a, _n g] = 1$ for some $n$, then $[a, g]^{\ell^{n-1}} = 1$. We argue by induction on $n$, the case $n =1$ being obvious. Let $n>1$, and suppose $[a, _{n} g] = 1$. If we denote $s=[a,_{n-2} g]$, we have $[s, g, g]=1$. One can then prove by induction that $[s,g^k]= [s,g]^k$ for all $k\in \mathbb{Z}$. Then $[a,_{n-1} g]^{\ell}=[s,g]^{\ell}=[s,g^{\ell}]=1$. 
Write $K = \l [a,_{n-1} g]^h \mid h \in G \r$, $\o{A} = A/K$, and consider the Engel action $G \c \o{A}$, which is still an Engel action of a finite group on a finitely generated abelian group. Then, $[\bar{a}, _{n-1} g] = \o{[a, _{n-1} g]} = \o{1}$ and, by induction, $[\bar{a}, g]^{\ell^{n-2}} = \o{1}$. Thus, $[a, g]^{\ell^{n-2}} \in K$, and so $[a, g]^{\ell^{n-1}} = 1$ because $K$ is abelian generated by elements of order dividing $\ell$. This completes the proof of the claim.

We have proved that each element of the generator set $[A, G]$ of $\l [A, G] \r$ has finite order.
Since $\l [A, G] \r$ is finitely generated and abelian, we conclude that $\l [A, G]\r$ is finite.
The last part of the lemma immediately follows. 
\end{proof}

%
%


%
%

%
%
%

Now we can proceed to the proof of our main theorem.

\begin{thm}
\label{albert}
Let $G \leq \Gamma$ be a fractal group such that $|G':\St_G(1)'| = \infty$.
Then $L(G) = 1$.
\end{thm}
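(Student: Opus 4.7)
The plan is to show $L(G) \subseteq \St_G(1)$; once that holds, \cref{ts} (which applies to $G$ because every fractal subgroup of $\Gamma$ is strongly fractal) immediately yields $L(G) = 1$. I argue by contradiction: suppose there exists $g \in L(G) \setminus \St_G(1)$ and aim to deduce $|G' : \St_G(1)'| < \infty$, contradicting the hypothesis.

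Set $N = \St_G(1)'$, $\o G = G/N$ and $A = \St_G(1)/N$, so that $A$ is abelian and normal in $\o G$. Because $A$ is abelian, the conjugation action $\o G \c A$ factors through the quotient $F := \o G / A \cong G/\St_G(1)$. Since $G \leq \Gamma$, the group $G/\St_G(1)$ embeds in $C_p$, and the choice $g \notin \St_G(1)$ forces $F \cong C_p$ with $\o g A$ a generator. Moreover, for each $h \in \St_G(1)$ the relation $[h,_n g] = 1$ (valid for some $n$, since $g \in L(G)$) descends to $[\o h,_n \o g A] = 0$ in $F \c A$, so $\o g A$ is a left Engel element of this action.

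The main obstacle is to upgrade this single left Engel element to the statement that the whole action $F \c A$ is Engel. Viewing $A$ as a $\ZZ[F]$-module and writing $u = \o g A$, for each $a \in A$ pick $n$ with $(u-1)^n a = 0$. For any $k \in \ZZ$, the factorization $u^k - 1 = (u-1)v$ in the commutative ring $\ZZ[F]$, where $v = 1 + u + \cdots + u^{k-1}$, gives
\begin{equation*}
(u^k - 1)^n a = (u-1)^n v^n a = v^n (u-1)^n a = 0,
\end{equation*}
since $v^n$ and $(u-1)^n$ commute. Hence every $f \in F$ is left Engel on $A$, i.e.\ $F \c A$ is an Engel action.

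Under the implicit hypothesis that $G$ is finitely generated (so that $\St_G(1)$, of index dividing $p$, is itself finitely generated and $A$ is a finitely generated abelian group), \cref{gustavobis} applies and yields that $\l [A, F] \r$ is finite. It remains to identify this with $\o{G'}$. On the one hand, $G' = \l [\St_G(1), G] \r$: pick $t \in G \setminus \St_G(1)$, write any two elements of $G$ as $t^i s_1$ and $t^j s_2$ with $s_1, s_2 \in \St_G(1)$, and expand $[t^i s_1, t^j s_2]$ using the standard commutator identities together with $[t^i, t^j] = 1$, so that each resulting factor lies in $\l [\St_G(1), G] \r$. On the other hand, reducing modulo $N$, every commutator $[\o s, \o g]$ with $s \in \St_G(1)$, $g \in G$ equals $[\o s, \o g A]$ because the action factors through $F$, whence $\o{G'} = \l [A, F] \r$. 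Therefore $|G' : \St_G(1)'| = |\o{G'}|$ is finite, the required contradiction.
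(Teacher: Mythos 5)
Your proof is correct, and it reaches the same contradiction as the paper ($|G':\St_G(1)'|<\infty$) but by a genuinely different route at the key step. The paper observes that $G/\St_G(1)'$ is soluble and invokes Gruenberg's theorem that the left Engel elements of a soluble group form a subgroup; since $\St_G(1)/\St_G(1)'$ is abelian and normal it lies in that subgroup, so the presence of a left Engel element outside $\St_G(1)$ forces $L(G/\St_G(1)')$ to be all of $G/\St_G(1)'$, whence the induced action of $G/\St_G(1)$ on $\St_G(1)/\St_G(1)'$ is Engel. You instead upgrade the single Engel generator of $F\cong C_p$ to an Engel action directly, via the identity $(u^k-1)^n=v^n(u-1)^n$ in the commutative ring $\ZZ[F]$ --- a pleasant touch, since it is exactly the module-theoretic form of the step $[s,g^k]=[s,g]^k$ already used inside the proof of \cref{gustavobis}. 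Your route is more elementary and self-contained (no appeal to Gruenberg), at the cost of a slightly longer verification; the paper's route is shorter but imports an external theorem. The remaining bookkeeping ($G'=\langle[\St_G(1),G]\rangle$ because $G/\St_G(1)$ is cyclic, and the identification of $\langle[A,F]\rangle$ with $\o{G'}$) matches the paper. Finally, the finite generation of $A=\St_G(1)/\St_G(1)'$ that you flag as an implicit hypothesis is equally needed in the paper's own application of \cref{gustavobis}, so this is a shared (and reasonable) assumption rather than a gap in your argument.
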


\begin{proof}
Write $S$ for $\St_G(1)$.
We claim that $L(G)\subseteq S$, from which $L(G)=1$ follows by using Corollary \ref{ts} and the fact that subgroups of $\Gamma$ are strongly fractal if and only if they are fractal (Lemma 2.5 of \cite{Jone}).
By way of contradiction, we assume that $L(G)\not\subseteq S$.
Since $G\le \Gamma$, this implies that $|G:S|=p$ and also the factor group $G/S'$ is soluble.
By a result of Gruenberg (Theorems 2 and 4 of \cite{Gruenberg2}), $L(G/S')$ is a subgroup of $G/S'$.

Since $S/S'$ is an abelian normal subgroup of $G/S'$, we have $S/S'\subseteq L(G/S')$ and then either $L(G/S')=S/S'$ or $G/S'$.
In the former case, we have $L(G)\subseteq S$, which is a contradiction.
In the latter, $G/S'$ is an Engel group.
Since $S/S'$ is abelian, the action of $G$ on $S$ by conjugation induces an action of $G/S$ on $S/S'$, by which we have $[sS',gS]=[s,g]S'$ for all $s\in S$ and $g\in G$. 
Thus this action is Engel.
By Proposition \ref{gustavobis}, the subgroup $\langle [G/S,S/S'] \rangle$ is finite, i.e.\
$\langle [G,S] \rangle/S'$ is finite.
Now since $G/S$ is cyclic, it follows that $G'=\langle [G,S] \rangle$ and we conclude that $G'/S'$ is finite.
This contradiction completes the proof of the theorem.
\end{proof}

\section{Applications to specific fractal groups}

In this section we apply our main result to a family of fractal groups.
Since $R(G)^{-1}, \o L(G), \o R^{-1}(G) \subseteq L(G)$ for any group $G$, by proving that $L(G)=1$ one automatically obtains that each one of these sets is also trivial. We will omit this observation in the statement of the subsequent results.


\begin{thm}\label{marialaura}
Let $G\leq \Gamma$ be a nonabelian fractal group with torsion-free abelianization.  Then $L(G)=1$.
\end{thm}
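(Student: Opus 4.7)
The plan is to reduce to \cref{albert}: I will show that under the hypotheses of \cref{marialaura}, one has $|G':\St_G(1)'|=\infty$. Write $S=\St_G(1)$ and argue by contradiction, assuming $|G':S'|<\infty$ and aiming to deduce that $G$ is abelian.

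Self-similarity of $G$ together with \cref{ra} rules out $G\le \St(1)$ (since $G$ is non-trivial), so $G/S\cong C_p$ and $G'\le S$. The crucial step is to show that $\pi_i(G')\subseteq G'$ for every $i$. Fractality gives surjections $\pi_i:S\twoheadrightarrow G$ with $\pi_i(S')=G'$, which induce surjections $\pi_i^{\ast}:S/S'\twoheadrightarrow G/G'$. Since $G/G'$ is torsion-free, $\pi_i^{\ast}$ annihilates the torsion subgroup $T(S/S')$; and a short argument identifies $T(S/S') = G'/S'$ (the quotient $G'/S'$ is torsion by the contradiction hypothesis, while any $sS'\in T(S/S')$ gives $sG'\in S/G'\hookrightarrow G/G'$ of finite order, hence trivial, forcing $s\in G'$). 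Now $S/G'$ sits inside $G/G'$ as a subgroup of index $p$, so both are finitely generated free abelian of the same rank, and the induced surjection $S/G'\twoheadrightarrow G/G'$ is necessarily an isomorphism. Therefore $\ker\pi_i^{\ast}=G'/S'$, equivalently $\pi_i^{-1}(G')\cap S=G'$, giving $\pi_i(G')\subseteq G'$.

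From here a straightforward induction on $n$ shows $G'\le \St_G(n)$ for all $n\ge 1$. The base case $n=1$ reduces to $G'\le S$. For the inductive step, take $g\in G'\le S$; then $\psi(g)=(g_1,\dots,g_p)\in G^p$ by self-similarity, each $g_i$ lies in $G'$ by the previous paragraph, and the inductive hypothesis places each $g_i$ in $\St_G(n)$. Thus $g$ fixes level $1$ while each section fixes the first $n$ levels of its subtree, so $g\in \St_G(n+1)$. Since $\bigcap_n \St_G(n)=1$, we conclude $G'=1$, contradicting the non-abelianness of $G$. Therefore $|G':S'|=\infty$, and \cref{albert} delivers $L(G)=1$.

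The main obstacle is the kernel computation $\ker\pi_i^{\ast}=G'/S'$: this is where the torsion-free abelianization and the contradiction hypothesis $|G':S'|<\infty$ combine to pin down $\pi_i(G')\subseteq G'$. Once this inclusion is in place, the inductive descent yielding $G'\le \bigcap_n\St_G(n)=1$ is a routine variation of the trick encoded in \cref{ra}.
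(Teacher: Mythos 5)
Your proposal is correct and follows essentially the same route as the paper: assume $|G':\St_G(1)'|<\infty$, use fractality to get $\pi_i(\psi(\St_G(1)'))=G'$, invoke torsion-freeness of $G/G'$ to force $\pi_i(\psi(G'))\subseteq G'$, and then run the descent of \cref{ra} to conclude $G'=1$, a contradiction, so that \cref{albert} applies. One remark: the detour through $\ker\pi_i^{\ast}=G'/S'$ via ranks is both unnecessary (the inclusion $G'/S'\subseteq\ker\pi_i^{\ast}$, which is all you use, already follows from $G'/S'$ being finite and $G/G'$ torsion-free) and implicitly assumes $G$ is finitely generated, which the theorem does not hypothesize.
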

\begin{proof}
Denote $S=\St_G(1)$. By \cref{albert}, it suffices to prove that $|G':S'|= \infty$. Suppose towards contradiction that this is not the case. Let $\pi_i$ be the projection  of  $G \times \ov{p}{\dots} \times G$ onto its $i$-th component. Notice the chain of inclusions $S' \leq G' \leq S\leq G$.  Since $G$ is strongly fractal, $\pi_i(\psi(S')) = G'$ for all $i=1, \dots, p$.
By assumption $|G':S'|< \infty$, hence   $|\psi(G') : \psi(S')| < \infty$ and $|\pi_i(\psi(G')) : \pi_i(\psi(S'))| < \infty$, for all $i=1, \dots, p$. Thus, $\pi_i(\psi(G'))/G'$ is a finite subgroup of $G/G'$.  Since $G/G'$ is torsion-free, it follows that  $\pi_i(\psi(G')) = G'$  for all $i$. This implies that $\psi(G') \leq G' \times \ov{p}{\dots} \times G'$. In this case, by \cref{ra}, we have $G'=1$, which is a contradiction. 
%
%
\end{proof}




\subsection{The Basilica group and the Brunner-Sidki-Vieira group}


The \emph{Basilica group} $\mc{B}$ is the subgroup of $\Aut\T_2$ generated by the two automorphisms $a$ and $b$ defined as:
\begin{equation*}
a = (1, b), \quad \quad b = (1, a)\sigma,
\end{equation*}
where $\sigma$  is the rooted automorphism of $\T_2$ that corresponds to the permutation $(12)$. Here we make the abuse of notation of writing $a=(1,b)$ instead of $\psi(a)=(1,b)$, and similarly for the element $(1, a)$.

The \emph{Brunner-Sidki-Vieira group}  $\mc{S}$ is the group of automorphisms of $\T_2$ generated by
\begin{equation*}
c = (1, c^{-1})\sigma, \quad \quad d = (1, d)\sigma,
\end{equation*}
where $\sigma$ is the rooted automorphism of $\T_2$ corresponding to the cycle $(12)$. Again, we have omitted the map $\psi$ in the definition of $c$ and $d$.

In \cite{ZUK} and \cite{BRUNNER} it is proved that $\mc{B}$ and $\mc{S}$ have torsion-free abelianization. Then, the following holds.

\begin{thm}
The Basilica group and the Brunner-Sidki-Vieira group have no nontrivial left Engel elements.
\end{thm}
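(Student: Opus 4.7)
The plan is to deduce the theorem as a direct application of Theorem \ref{marialaura} (the preceding result), once we verify that the two groups meet its three hypotheses: being contained in $\Gamma$, being nonabelian and fractal, and having torsion-free abelianization.

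First I would observe that since $p=2$, Definition \ref{l777} gives $\Gamma = \Aut\T_2$, so the inclusion $\mathcal{B},\mathcal{S}\leq\Gamma$ is automatic and needs no verification. Next I would point out that neither $\mathcal{B}$ nor $\mathcal{S}$ is abelian; this is immediate from the defining wreath recursions, since in each case the two generators do not commute (a quick calculation with $\psi$ and the action \eqref{carmine} exhibits a nontrivial commutator at level two). The torsion-free abelianization property is precisely the content of the references \cite{ZUK} and \cite{BRUNNER} cited just before the statement, so that hypothesis is supplied.

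The remaining hypothesis is fractality. For $\mathcal{B}$, from the recursions $a=(1,b)$ and $b=(1,a)\sigma$ one sees that $a,b\in \St_{\mathcal{B}}(1)$ forces $\psi(a)$ and $\psi(b^2)=(a,a)$ to lie in $\mathcal{B}\times\mathcal{B}$, and the projection onto either coordinate of the first-level stabilizer surjects onto the generators $a$ and $b$, so $\mathcal{B}$ is (strongly) fractal. The same argument works verbatim for $\mathcal{S}$ using $c=(1,c^{-1})\sigma$ and $d=(1,d)\sigma$: the elements $cd^{-1}=(d,c^{-1})$ and $dc^{-1}=(c,d^{-1})$ (or analogous products) lie in $\St(1)$ and their sections generate $\mathcal{S}$ in each coordinate. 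Both fractality statements are also recorded in the standard references on these groups, so I would simply cite them rather than prove them from scratch.

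With all three hypotheses of Theorem \ref{marialaura} verified, the conclusion $L(\mathcal{B})=L(\mathcal{S})=1$ is immediate. The only potential obstacle is being precise about the fractality verification, but this is routine given the explicit wreath recursions, and in any event is standard knowledge for these two well-studied groups.
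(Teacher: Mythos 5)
Your proposal is correct and takes essentially the same route as the paper, which likewise deduces the theorem directly from Theorem~\ref{marialaura} after citing \cite{ZUK} and \cite{BRUNNER} for the torsion-free abelianizations (the paper gives no further detail, leaving fractality and nonabelianness as known facts). Only minor slips in your explicit checks: $b\notin\St_{\mathcal{B}}(1)$ (you want $a$, $b^2$, and $a^b$ as stabilizer generators), and for $\mathcal{S}$ it is $cd=(d,c^{-1})$ rather than $cd^{-1}$; neither affects the argument.
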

%
%
%

\subsection{The GGS-group with constant defining vector}

Let $p$ be an odd prime, and $\bm{e} = (e_1, \dots, e_{p-1})$ a vector such that $e_i\in \{0,\dots, p-1\}$ and such that not all the $e_i$ are $0$. The GGS-group $\mc{G}_{\bm{e}}$, named after Grigorchuk, Gupta, and Sidki, is the group generated by the two automorphisms $a, b\in \Aut\T_p$, where $a$ is the rooted automorphism corresponding to the cycle $(1 \dots p)$, and $b\in \St(1)$ is:
$$
b=(a^{e_1}, \dots, a^{e_{p-1}}, b).
$$
The group $\mc{G}_{\bm{e}}$ is fractal for any vector $\bm{e}$, but other properties depend on the choice of $\bm{e}$ (see \cite{Gustavo}). For instance, $\mc{G}_{\bm{e}}$ is just infinite if and only if $\bm{e}$ is not constant. In this paper we  consider only the case when $\bm{e}$ is constant:\ i.e.\ $e_1= \dots = e_{p-1}=n$ for some nonzero $n$.  Since proportional nonzero vectors define the same GGS-group, we may assume that $n=1$. Throughout this section we let $\mc{G}$ denote $\mc{G}_{(1, \dots, 1)}$. 

According to Lemma 4.2 of \cite{Gustavo}, $\mc{G}$  has a normal subgroup $K$ of index $p$ such that $\mc{G}$ is weakly regular branch over $K'$.
Moreover, if $\bar{\ }$ denotes the projection $\mc{G} \to \mc{G}/K'$, then by Proposition 3.4 of \cite{GustavoVonDyck}, we have $\o{\mc{G}}=\o K\ltimes \langle \o a \rangle$, with
$\o K \cong C_{\infty} \times \overset{p-1}{\dots}\times C_{\infty}$, and $\langle \o a \rangle\cong C_p$. 

%
%
%

Our goal is to prove that $L(\mc{G})=1$. 
We first show that $L(\mc{G}) \subseteq L(K)$ by studying $L(\o{\mc{G}})$, and then  we prove that $L(K)=1$.

\begin{lem}\label{mm6}
We have $L(\o{\mc{G}})=\o K$. As a consequence, $L(\mc{G})\subseteq L(K)$.
\end{lem}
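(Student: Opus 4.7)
The plan is to establish $L(\o{\mc{G}})=\o{K}$ by double inclusion, from which $L(\mc{G})\subseteq L(K)$ will follow by pulling back. The inclusion $\o{K}\subseteq L(\o{\mc{G}})$ is immediate: because $\o{K}\cong C_\infty^{p-1}$ is an abelian normal subgroup of $\o{\mc{G}}$, for any $g\in \o{K}$ and any $x\in \o{\mc{G}}$ the commutator $[x,g]$ lies in $\o{K}$ and therefore commutes with $g$, yielding $[x,g,g]=1$, so every element of $\o{K}$ is left Engel with constant $n=2$.

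For the reverse inclusion I would take $g\in \o{\mc{G}}\setminus \o{K}$ and use the decomposition $\o{\mc{G}}=\o{K}\rtimes \langle \o{a}\rangle$ to write $g=k\o{a}^i$ with $k\in \o{K}$ and $1\le i\le p-1$. For any $h\in \o{K}$, abelianness of $\o{K}$ makes $k$ commute with $h$, so $h^g=h^{\o{a}^i}$; hence $[h,g]=[h,\o{a}^i]\in \o{K}$, and a straightforward induction gives $[h,_n g]=[h,_n \o{a}^i]$ for every $n\ge 1$. Thus $g\notin L(\o{\mc{G}})$ as soon as the action of $\langle \o{a}^i\rangle=\langle \o{a}\rangle$ on the free abelian group $\o{K}$ fails to be Engel, and by the contrapositive of \cref{gustavobis} this is equivalent to the action being nontrivial.

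The main obstacle is therefore to show that $\langle \o{a}\rangle$ acts nontrivially on $\o{K}$, which I plan to do from the structure of $K$. Since $K\trianglelefteq \mc{G}$ has index $p$, contains $b$, and $\mc{G}/\langle b\rangle^{\mc{G}}$ is cyclic (generated by the image of $a$) of order dividing $p$, the normal closure $\langle b\rangle^{\mc{G}}$ must equal $K$, so $\o{K}$ is generated by the conjugates $\o{b}^{\o{a}^j}$ for $0\le j\le p-1$. If $\o{a}$ acted trivially on $\o{K}$ then all these generators would coincide with $\o{b}$ and $\o{K}$ would be cyclic, contradicting $\o{K}\cong C_\infty^{p-1}$ with $p-1\ge 2$. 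Finally, for the consequence: if $g\in L(\mc{G})$ then its image in $\o{\mc{G}}$ lies in $\o{K}=L(\o{\mc{G}})$, which forces $g\in K$; and since the condition $[x,_n g]=1$ for all $x\in \mc{G}$ is in particular valid for all $x\in K$, we conclude $g\in L(K)$.
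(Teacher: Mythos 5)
Your first inclusion ($\o K\subseteq L(\o{\mc{G}})$ with Engel length $2$), the reduction $[h,_n g]=[h,_n \o a^{\,i}]$ for $g=k\o a^{\,i}$ and $h\in\o K$, and the final deduction of $L(\mc{G})\subseteq L(K)$ are all correct. However, the reverse inclusion has two problems. The first is factual: $b$ does \emph{not} lie in $K$. As recalled in the proof of \cref{AJ}, $K$ is generated by $y_0=ba^{-1}$ together with its conjugates $y_0^{a^i}$, so $bK=aK$ generates $\mc{G}/K\cong C_p$ and in particular $b\notin K$; thus $\langle b\rangle^{\mc{G}}$ is a \emph{different} normal subgroup of index $p$ and your identification $\langle b\rangle^{\mc{G}}=K$ fails. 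Your nontriviality argument survives once $b$ is replaced by $ba^{-1}$: if $\o a$ centralized $\o K$, all the conjugates $\o y_0^{\,\o a^{\,j}}$ would coincide and $\o K$ would be cyclic, contradicting $\o K\cong C_\infty\times\overset{p-1}{\dots}\times C_\infty$ with $p-1\geq 2$. (The paper reaches the same contradiction differently: a trivial action makes $\o{\mc{G}}$ abelian, hence $\mc{G}'=K'$, which is impossible since $|\mc{G}:\mc{G}'|=p^2$ while $|\mc{G}:K'|=\infty$.)

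The second problem is a logical gap. Your reduction requires that \emph{each} $\o a^{\,i}$ with $i\not\equiv 0\pmod p$ fails to be left Engel for the action on $\o K$. The contrapositive of \cref{gustavobis} only yields that the action of $\langle\o a\rangle$ is not an Engel action, i.e.\ that \emph{some} element of $\langle\o a\rangle$ fails to be left Engel; it does not, as a black box, single out your particular $\o a^{\,i}$. Two patches are available. One is the paper's route: $\o{\mc{G}}$ is soluble, so by Gruenberg $L(\o{\mc{G}})$ is a subgroup containing the index-$p$ subgroup $\o K$, hence equals $\o K$ or $\o{\mc{G}}$, and the second case is excluded by \cref{gustavobis}. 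The other stays within your framework but uses the element-wise claim established \emph{inside} the proof of \cref{gustavobis}: if $[h,_n\o a^{\,i}]=1$ then $[h,\o a^{\,i}]^{p^{n-1}}=1$, so on the torsion-free group $\o K$ a left Engel $\o a^{\,i}$ would centralize $\o K$; since $\langle\o a^{\,i}\rangle=\langle\o a\rangle$, the whole action would then be trivial, contradicting the (corrected) nontriviality step. With either patch your argument is complete and is genuinely more hands-on than the paper's, avoiding Gruenberg's theorem at the cost of unpacking \cref{gustavobis}.
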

\begin{proof}
Since $\o{\mc{G}}$ is soluble, $L(\o{\mc{G}})$ is a subgroup of $\o{\mc{G}}$, and since $\o K$ is abelian, we have $\o K\le L(\o{\mc{G}})$.
Thus either $L(\o{\mc{G}})=\o K$ or $\o{\mc{G}}$.
In the latter case, the action of $\langle \o a \rangle$ on $\o K$ is Engel and, since $\o K$ is free abelian, this action is trivial, by \cref{gustavobis}.
Hence $\o{\mc{G}}$ is abelian and $\mc{G}'=K'$, which is a contradiction, since $|\mc{G}:\mc{G}'|=p^2$ is finite by Theorem 2.1 of \cite{Gustavo}.
It follows that $L(\o{\mc{G}})= \o K$ and, since $\o{L(\mc{G})}\subseteq L(\o{\mc{G}})$, we get
$L(\mc{G})\subseteq L(K)$.
%
%
%
%
%
%
%
%
%
%
%
%
\end{proof}

We now proceed to prove that $L(K)=1$. Unfortunately, $K$ is not self-similar. This can be fixed by appropriately conjugating $K$  in $\Aut\T_p$.

\begin{lem}[\cite{AleJone}]\label{AJ}
Let $h\in \St(1)$ be such that $\psi(h)=(ah,a^2h,\dots, a^{p-1}h, h)$. Then $K^h$ is strongly fractal.  
\end{lem}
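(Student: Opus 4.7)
The plan is to verify the two requirements of strong fractality directly using the explicit form of $\psi(h)$. Since $\psi(h)_\ell = a^\ell h$ for $\ell < p$ and $\psi(h)_p = h$, for any $g \in K \cap \St(1)$ with $\psi(g) = (g_1, \dots, g_p)$ one computes
\[
\psi(g^h)_\ell = g_\ell^{a^\ell h} = (g_\ell^h)^{a^\ell} \quad (\ell < p), \qquad \psi(g^h)_p = (g_p)^h.
\]

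For self-similarity of $K^h$, it suffices to show each of these components lies in $K^h$. The crucial auxiliary property is that \emph{the sections of any element of $K \cap \St(1)$ already lie in $K$}. This depends on the explicit description of $K$ as the kernel of the homomorphism $\mc{G} \to C_p$ that identifies the classes of $a$ and $b$, and can be verified on a Schreier generating set of $K \cap \St(1)$ (relative to the transversal $\{1, a, \dots, a^{p-1}\}$) using the GGS identity $\psi(b^{a^j})_\ell = b_{\sigma^{-j}(\ell)}$ together with $\psi(b) = (a, \dots, a, b)$. Once $g_\ell \in K$, the factor $g_\ell^h$ lies in $K^h$, and the outer conjugation by $a^\ell$ remains in $K^h$ precisely because the specific shape of $h$ is engineered so that $K^h$ is stable under each $a^\ell$-conjugation.

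For the projection condition $\pi_i(\psi(\St_{K^h}(1))) = K^h$, given $y = z^h \in K^h$ and an index $i$, one exhibits $g \in K \cap \St(1)$ with $\psi(g^h)_i = y$. Using fractality of $\mc{G}$, one first produces $g_0 \in \St_{\mc{G}}(1)$ whose $i$-th section hits the required value; the weakly regular branch inclusion $K' \times \overset{p}{\dots} \times K' \leq \psi(K')$ then allows correcting $g_0$ into $K$ without disturbing its $i$-th section, by absorbing the coset obstruction modulo $K$ into the remaining positions via elements of $K'$.

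The chief obstacle is the combined verification that sections of $K \cap \St(1)$ land in $K$ and that $K^h$ is closed under the outer $a^\ell$-twists; both are delicate and depend on the recursive identity characterising $h$, so for the complete technical details one invokes the analysis carried out in \cite{AleJone}.
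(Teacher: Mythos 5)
Your overall strategy (verify self-similarity and the projection condition directly from the explicit form of $h$) is reasonable, but the execution has concrete problems and the decisive steps are ultimately deferred to the reference rather than proved. First, the section computation is wrong: with the standard convention $x^{yz}=(x^y)^z$ one has $\psi(g^h)_\ell = g_\ell^{\,a^\ell h} = \bigl(g_\ell^{\,a^\ell}\bigr)^h$, \emph{not} $\bigl(g_\ell^{\,h}\bigr)^{a^\ell}$. This is not a cosmetic slip: with the correct order, once you know $g_\ell\in K$ you get $g_\ell^{\,a^\ell}\in K$ by normality of $K$ in $\mc{G}$ and hence $\bigl(g_\ell^{\,a^\ell}\bigr)^h\in K^h$ immediately, whereas your version forces you to assert that ``$K^h$ is stable under each $a^\ell$-conjugation.'' That assertion is equivalent to $ha^\ell h^{-1}$ normalizing $K$, which you neither prove nor need. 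Second, your self-similarity check only treats $g\in K\cap\St(1)$; a general element of $K^h$ has a nontrivial rooted part, and its first-level sections acquire extra factors of the form $h_u^{-1}h_{\sigma^t(u)}$ that your computation never addresses. Third, the projection condition is only sketched (via fractality of $\mc{G}$ and the branch inclusion $K'\times\dots\times K'\le\psi(K')$) and not carried out, and you close by invoking the cited reference for ``the complete technical details,'' so the argument is not self-contained.

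The paper's proof avoids all of this by working with generators: $K=\langle y_0,\dots,y_{p-1}\rangle$ with $y_i=(ba^{-1})^{a^i}$, and a direct computation shows $y_i^h=z_i$ where $z_i=(1,\dots,1,z_i,1,\dots,1)a^{-1}$ (the nontrivial section in position $i$). Since checking sections of a generating set suffices for self-similarity, and since $z_i^p=(z_i,\dots,z_i)$ exhibits every generator as an $i$-th section of an element of $\St_{K^h}(1)$ for every $i$, both conditions drop out at once. If you want to salvage your approach, fix the conjugation order, replace the ``stability under $a^\ell$'' claim by normality of $K$, and handle non-stabilizer elements --- but the generator computation is both shorter and more transparent.
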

\begin{proof}
As shown in Lemma 4.2 of \cite{Gustavo},  $K$ is generated by $y_0= ba^{-1}$, and $y_i= y_0^{a^i}$ ($i=1, \dots, p-1$). Write 
\begin{align*}
z_1=(z_1, 1, \dots, 1)a^{-1},\
z_2=(1, z_2, 1, \dots, 1)a^{-1},\
 \dots,\
z_p=(1, \dots, 1, z_p)a^{-1}.
\end{align*} By making computations, one may check that $y_i^{h}=z_{i}$ for all $i$ (reading the subindices modulo $p$). Hence, $K^h$ is generated by the $z_i$. It is now clear that $K^h$ is  self-similar,   and strong-fractalness of $K^h$ is a consequence of the identity $z_i^p=(z_i, \dots, z_i)$, which holds for all $i$.
\end{proof}
%
%

\begin{thm}
The GGS-group with constant defining vector has no nontrivial Engel elements.
%
\end{thm}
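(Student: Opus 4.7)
The plan is to combine Lemma \ref{mm6} with Lemma \ref{AJ} and apply Theorem \ref{marialaura} to an appropriate conjugate of the subgroup $K$. By Lemma \ref{mm6} we already know $L(\mathcal{G})\subseteq L(K)$, so the whole task reduces to showing that $L(K)=1$. Since the set of left Engel elements is preserved under conjugation inside $\Aut\T_p$ (we have $L(K^h)=L(K)^h$), it suffices to prove $L(K^h)=1$ for the element $h\in\St(1)$ produced in Lemma \ref{AJ}.

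First I would verify that $K^h$ sits inside $\Gamma$, so that Theorem \ref{marialaura} is applicable. Since $\Gamma$ is a subgroup of $\Aut\T_p$, we have $\Gamma^h=\Gamma$ whenever $h\in\Gamma$; and the recursive defining relation $\psi(h)=(ah,a^2h,\dots,a^{p-1}h,h)$ together with $a\in\Gamma$ forces $h\in\Gamma$ (this is the unique solution of the recursion in $\Aut\T_p$, and $\Gamma$ is closed under the recursion because it corresponds to the iterated wreath product with $C_p$). Hence $K^h\le \Gamma^h=\Gamma$, and by Lemma \ref{AJ} the group $K^h$ is strongly fractal, in particular fractal.

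Next I would check that $K^h$ satisfies the two hypotheses of Theorem \ref{marialaura}: nonabelianness and torsion-free abelianization. Nonabelianness is immediate from the fact that $\mathcal{G}$ is weakly regular branch over $K'$, which forces $K'\neq 1$, so $K$ (and hence $K^h$) is nonabelian. For the abelianization, Proposition 3.4 of \cite{GustavoVonDyck} recalled in the proof of Lemma \ref{mm6} tells us that $\o{K}=K/K'\cong C_\infty\times\overset{p-1}{\cdots}\times C_\infty$, which is torsion-free. Conjugation by $h$ in $\Aut\T_p$ induces an isomorphism between $K$ and $K^h$, so $(K^h)^{\mathrm{ab}}\cong K^{\mathrm{ab}}$ is torsion-free as well.

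With these facts in hand, Theorem \ref{marialaura} gives $L(K^h)=1$. Unwinding the conjugation yields $L(K)=1$, and combining with $L(\mathcal{G})\subseteq L(K)$ from Lemma \ref{mm6} completes the proof. The only subtle step is the verification that $K^h\le \Gamma$, i.e.\ that the auxiliary element $h$ used to self-similarize $K$ actually belongs to the Sylow pro-$p$ subgroup $\Gamma$; everything else is a direct bookkeeping application of results already proved in the paper.
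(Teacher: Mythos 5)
Your proposal is correct and follows essentially the same route as the paper's own proof: reduce to $L(K)=1$ via Lemma \ref{mm6}, pass to the conjugate $K^h$ of Lemma \ref{AJ}, and apply Theorem \ref{marialaura} using the torsion-freeness of $K/K'$. The only difference is that you spell out the (true but routine) verifications that $h\in\Gamma$, hence $K^h\le\Gamma$, and that $K$ is nonabelian, which the paper dismisses as clear.
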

\begin{proof}
By \cref{mm6}, it suffices to show that $L(K)=1$. Let $h$ be the element of \cref{AJ}. Since $K\cong K^h$, one has that $L(K)=1$ if and only if $L(K^h)=1$. To prove the latter  we will use \cref{marialaura}.   \cref{AJ} states that $K^h$ is fractal, and clearly $K^h$ is a subgroup of the Sylow pro-$p$ subgroup $\Gamma$.
On the other hand, $(K^h)/(K^h)' \cong K/K'$ is torsion-free.
Hence, by \cref{marialaura}, we have $L(K^h)=1$, and we conclude that $L(\mc{G})=1$.
\end{proof}

\section{The lamplighter group and the adding machine:  examples}

\cref{albert} states that $L(G)=1$ for any fractal group $G$ such that $G \leq \Gamma\leq \Aut\T_p$ and $|G':\St_G(1)'|=\infty$. In this section we show that if $|G:G'|=\infty$ and $|G':\St_G(1)'|<\infty$ this is no longer true. We also prove that the condition of being fractal cannot be dropped.

Recall that the lamplighter group $\mc{L}$ is the metabelian group $C_2\wr C_\infty$. It is well known that $\mc{L}$ can also be seen as the group of automorphisms of the binary tree $\T_2$  generated 
by $a=(a,a\sigma)$ and  the rooted automorphism $\sigma$ corresponding to the cycle $(12)$.


%
\begin{pr}\label{mandarina}
The lamplighter group $\mc{L}$ satisfies the following properties: $\mc{L}\leq \Gamma=\Aut\T_2$, $|\mc{L}:\mc{L}'|= \infty$, $\mc{L}$ is fractal, $|\mc{L}' : \St_{\mc{L}}(1)'|$ is finite, and $L(\mc{L}) \neq 1.$
\end{pr}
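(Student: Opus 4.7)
The plan is to verify the five claims in order, leaning on the standard description $\mathcal{L} = B \rtimes \langle a \rangle$ with $B = \bigoplus_{n \in \mathbb{Z}} \langle \sigma_n \rangle$, where $\sigma_n := \sigma^{a^n}$ and $a$ acts on $B$ by shifting the index. The containment $\mathcal{L} \leq \Gamma$ is immediate from \cref{l777}, since $\Gamma = \Aut\T_2$ when $p = 2$. The abelianization of $C_2 \wr C_\infty$ collapses the lamps $\sigma_n$ into a single $C_2$ factor while $\langle a \rangle$ survives, so $\mathcal{L}^{ab} \cong C_2 \times C_\infty$ and $|\mathcal{L} : \mathcal{L}'| = \infty$. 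For fractality, I would check $\pi_i(\St_\mathcal{L}(1)) = \mathcal{L}$ for $i = 1, 2$: the identity $\psi(a) = (a, a\sigma)$ places $a$ in $\pi_1$ and $a\sigma$ in $\pi_2$, and a direct computation yields $[a, \sigma] \in \St_\mathcal{L}(1)$ with $\psi([a, \sigma]) = (\sigma, \sigma)$, adding $\sigma$ to both projections. Since $\{a, \sigma\}$ and $\{a\sigma, \sigma\}$ each generate $\mathcal{L}$, the group $\mathcal{L}$ is strongly fractal, and hence fractal because $\mathcal{L} \leq \Gamma$.

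The nontriviality of $L(\mathcal{L})$ is the easiest point. For any $b \in B$ and $x \in \mathcal{L}$, normality of $B$ in $\mathcal{L}$ forces $[x, b] \in B$, and then $[x,_2 b] = [[x, b], b] = 1$ because $B$ is abelian. Hence $B \subseteq L(\mathcal{L})$, so $L(\mathcal{L}) \neq 1$.

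The main obstacle is showing $|\mathcal{L}' : \St_\mathcal{L}(1)'| < \infty$; here I would describe both subgroups as $\mathbb{F}_2$-subspaces of $B$. Since $\mathcal{L}'$ is normally generated by $[\sigma, a] = \sigma_0 \sigma_1$, it coincides with the even-weight subgroup $\langle e_n := \sigma_n \sigma_{n+1} \mid n \in \mathbb{Z} \rangle$ of $B$. Every $\sigma_n$ acts nontrivially on the root (being conjugate to $\sigma$), so $B \cap \St_\mathcal{L}(1)$ is precisely this even-weight subgroup $\mathcal{L}'$, whence $\St_\mathcal{L}(1) = \mathcal{L}' \rtimes \langle a \rangle$. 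Since $\mathcal{L}'$ is abelian, $\St_\mathcal{L}(1)'$ reduces to the $\langle a \rangle$-submodule $(a-1)\mathcal{L}'$, which contains $[e_n, a] = \sigma_n \sigma_{n+2} = e_n + e_{n+1}$ for every $n \in \mathbb{Z}$. All generators $e_n$ therefore become equal modulo $\St_\mathcal{L}(1)'$, forcing $|\mathcal{L}' : \St_\mathcal{L}(1)'| \leq 2$. The only delicate step in this chain is the identification $B \cap \St_\mathcal{L}(1) = \mathcal{L}'$, which I would pin down by tracking the root actions of the generators of $B$.
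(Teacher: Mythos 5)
Your proof is correct. Four of the five verifications (the containment in $\Gamma$, the abelianization, fractality, and $B\subseteq L(\mc{L})$) coincide with what the paper does, but for the key point $|\mc{L}':\St_{\mc{L}}(1)'|<\infty$ you take a genuinely different route. The paper stays inside the tree: it identifies $S=\St_{\mc{L}}(1)=\l a,a^\sigma\r$, writes $S/S'=\l aS',cS'\r$ with $c=[a,\sigma]$, deduces $\mc{L}'/S'=\l cS'\r$ from the fact that $a$ has infinite order modulo $\mc{L}'$, and concludes since $c=(\sigma,\sigma)$ has order $2$. You instead exploit the abstract wreath-product structure, viewing the base group $B$ as the module $\mathbb{F}_2[t^{\pm1}]$, so that $\mc{L}'=(1+t)B$, $\St_{\mc{L}}(1)=\mc{L}'\rtimes\l a\r$, and $\St_{\mc{L}}(1)'=(1+t)^2B$, whence the quotient is $B/(1+t)B\cong C_2$; the only tree input you need is that each conjugate $\sigma_n$ of $\sigma$ acts nontrivially at the root (true because the root action is a homomorphism to $S_2$), which pins down $B\cap\St_{\mc{L}}(1)$ as the even-weight subgroup. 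Your module-theoretic computation is arguably more transparent and generalizes readily to $C_p\wr C_\infty$, while the paper's argument gets the order-$2$ relation for free from the explicit section $c=(\sigma,\sigma)$ and requires no identification of $\St_{\mc{L}}(1)$ beyond its generators; both yield index exactly $2$ (the paper notes equality via its Proposition on sets $S$ with $\psi(S)\subseteq S\times S$, though only finiteness is needed).
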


\begin{proof}
The first property is trivially satisfied. To see that the group $\mc{L}$ is fractal, notice that the first components of $a$ and $a^{\sigma}$ generate $\mc{L}$, since $a=(a, a\sigma)$ and $a^{\sigma}=(a\sigma, a)$. The same holds  for the second components. It is also well known that the abelianization of $\mc{L}$ is isomorphic to $C_2 \times C_{\infty}$ 
 with $\l \sigma \mc{L}'\r \cong C_2$ and $\l a \mc{L}' \r \cong C_{\infty}$.

We now prove that the index of  $\St_{\mc{L}}(1)'$ in $\mc{L}'$ is finite. Let us write $S = \St_{\mc{L}}(1)$. Notice that $S' \subseteq \mc{L}' \subseteq S \subseteq \mc{L}$.  One can compute that $S = \l a, a^{\sigma} \r$.  
Now, letting $c=[a,\sigma]$,
\begin{align}\label{stablamp}
 S/S' = \l aS', a^{\sigma}S' \r = \l aS', acS'\r = \l aS', cS'\r.   
\end{align}
%
We claim that $\mc{L}'/S' = \l cS'\r$. Indeed, let $y \in \mc{L}'/S'$. In particular,  $y \in S/S'$, and by \eqref{stablamp}, $y = a^{n}c^{m}S'$, for some $m$ and $n$. Then, $a^{n}S' \in \mc{L}'/S'$ and thus $n=0$, because $\langle a\mc{L'} \rangle \cong C_{\infty}$.  It follows that $\mc{L}'/S' \leq \l cS'\r$, and so $\mc{L}'/S' = \l cS'\r$. Notice that $c$ has order 2 because $c= [a, \sigma] = a^{-1}a^{\sigma} = (\sigma, \sigma)$. Then, $|\mc{L}': S'| \leq 2$ (in fact, $|\mc{L}': S'| =2$ by \cref{ra}).

Finally, since the base group of $\mc{L}$ is abelian and normal, it is contained in $L(\mc{L})$ and
$L(\mc{L})\ne 1$.
%
%
%
\end{proof}

We next show that the requirement of being fractal is necessary in \cref{albert}. 
Let  $H$ be the subgroup of $\Aut\T_2$ generated by $\sigma$ and $x$, where $\sigma$ is again the rooted automorphism corresponding to the cycle $(12)$, and $x=(1,x)\sigma$ is the so-called adding machine.

\begin{pr}
The group $H\leq \Gamma=\Aut\T_2$ is not fractal, $|H': \St_H(1)'|=\infty$, and $L(H)=\St_H(1)$. 
\end{pr}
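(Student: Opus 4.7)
The plan is to first determine $\St_H(1)$ explicitly; all three assertions then fall out of its structure. Since both generators $\sigma$ and $x$ have nontrivial rooted part, $|H:\St_H(1)|=2$, and applying Schreier's lemma with transversal $\{1,\sigma\}$ to the generating set $\{\sigma,x\}$ produces the Schreier generators $x\sigma=(1,x)$ and $\sigma x=(x,1)$. Thus $\St_H(1)=\langle (1,x),(x,1)\rangle$. These two elements act on disjoint subtrees and therefore commute, and since the adding machine $x$ has infinite order, $\St_H(1)\cong\mathbb{Z}^{2}$ is free abelian.

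Non-fractality and $|H':\St_H(1)'|=\infty$ are then immediate. Indeed, the first projection $\pi_1(\St_H(1))=\langle x\rangle$ misses the torsion element $\sigma$, so $\pi_1(\St_H(1))\neq H$; and $\St_H(1)$ being abelian gives $\St_H(1)'=1$, while a short calculation yields $[\sigma,x]=(x,x^{-1})\in H'$, an element of infinite order.

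The main task is $L(H)=\St_H(1)$, and both inclusions hinge on the same observation: since $\St_H(1)$ is abelian of index $2$, every $h\in H$ can be written as $s\sigma^{\varepsilon}$ with $s\in\St_H(1)$ and $\varepsilon\in\{0,1\}$, and for any $g\in\St_H(1)$ one has $[s\sigma^{\varepsilon},g]=[\sigma^{\varepsilon},g]$. For the inclusion $\St_H(1)\subseteq L(H)$ this is enough: the commutator $[\sigma^{\varepsilon},g]$ lies in the abelian group $\St_H(1)$ and hence commutes with $g$, giving $[h,g,g]=1$, so every element of $\St_H(1)$ is $2$-left Engel in $H$.

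For the reverse inclusion, I would pick $g=s\sigma\in H\setminus\St_H(1)$ and the test element $h=(x,1)\in\St_H(1)$. The identity above gives $[h,g]=[h,\sigma]\in\St_H(1)$, and iterating it along the commutator chain yields $[h,g,\overset{n}{\dots},g]=[h,\sigma,\overset{n}{\dots},\sigma]$. Using the formula $[(a,b),\sigma]=(a^{-1}b,b^{-1}a)$, a short induction shows that these iterated commutators have the form $(x^{\pm 2^{n-1}},x^{\mp 2^{n-1}})$, which is nontrivial for every $n\geq 1$ since $x$ has infinite order. The principal obstacle is precisely this last check: one must verify that the iterated commutators never collapse. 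The reduction from $g$ to $\sigma$ is what makes it tractable, after which the computation is a direct induction.
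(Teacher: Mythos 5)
Your proof is correct, and for the substantial part of the statement it takes a genuinely different route from the paper. The identification $\St_H(1)=\langle (1,x),(x,1)\rangle\cong\mathbb{Z}^2$, the non-fractality, and $|H':\St_H(1)'|=\infty$ all match the paper's computations (the paper works with $b=x\sigma=(1,x)$ and $b^\sigma=(x,1)$ and notes $H'=\{(x^{\pm n},x^{\mp n})\mid n\in\mathbb{Z}\}$). The difference is in proving $L(H)\subseteq\St_H(1)$: the paper invokes Gruenberg's theorem (since $H$ is soluble, $L(H)$ is a subgroup, hence equals $\St_H(1)$ or $H$) and then rules out $L(H)=H$ by applying \cref{gustavobis} to the induced Engel action of $H/\St_H(1)\cong C_2$ on the free abelian group $\St_H(1)$, which would force $H'=[H,\St_H(1)]=1$. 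You instead exhibit an explicit witness: for any $g=s\sigma\notin\St_H(1)$ the reduction $[h,{}_n\,g]=[h,{}_n\,\sigma]$ (valid because $\St_H(1)$ is abelian and normal) together with $[(a,b),\sigma]=(a^{-1}b,b^{-1}a)$ gives $[(x,1),{}_n\,\sigma]=(x^{\pm 2^{n-1}},x^{\mp 2^{n-1}})\neq 1$. Your argument is more elementary and self-contained --- it avoids both Gruenberg's theorem and \cref{gustavobis} and even quantifies the failure of the Engel condition --- whereas the paper's version is shorter given the machinery it has already built and mirrors the proof pattern used elsewhere (e.g.\ in \cref{mm6}). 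Your first inclusion, showing elements of $\St_H(1)$ are left $2$-Engel, is just the standard fact that an abelian normal subgroup lies in $L(H)$, which is also what the paper uses.
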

\begin{proof}
Let  $b=x\sigma$ and note that $b \in \St(1)$ and $b^2=(x,x)$. Then both elements $x$ and $b$ have infinite order. 
By easy computations, one can see that  $\St_H(1) = \langle b, b^\sigma \rangle$. Since $b=(1,x)$ and $b^\sigma=(x,1)$, we have $\St_H(1) \cong C_{\infty} \times C_{\infty}$. In particular, $\St_H(1)'=1$.  Moreover $H' = \langle [\sigma,b] \rangle^{H}= \langle (x^{-1}, x)\rangle^H = \{(x^{\pm n}, x^{\mp n}) \mid n\in \mathbb{Z}\}$, and it follows that $|H':\St_H(1)'|=\infty$. 
Note also that $H$ is not fractal because $b=(1,x)$ and $b^\sigma=(x,1)$. 

We now prove that $L(H)=\St_H(1)$.
One inclusion is obvious, since $\St_H(1)$ is abelian and normal in $H$.
Since $H$ is soluble, it follows that $L(H)=\St_H(1)$ or $H$.
In the latter case, the action of $H/\St_H(1)$ on $\St_H(1)$ is Engel and, by \cref{gustavobis}, this action must be trivial.
This implies that $H'=[H,\St_H(1)]=1$, which is a contradiction.
\end{proof}

\begin{acknowledgement}
We thank Alejandra Garrido and Jone Uria-Albizuri for communicating Lemma \ref{AJ} to us.
\end{acknowledgement}

\bibliography{bib}

\end{document}